\documentclass{amsart}

\usepackage{amsfonts,amssymb,amscd,amsmath,enumerate,verbatim,calc}

\newcommand{\lex}{\operatorname{lex}}

\newcommand{\lm}{\operatorname{LM}}
\newcommand{\lc}{\operatorname{LC}}

\newcommand{\bC}{{\boldsymbol C}}

\newcommand{\BZ}{{\boldsymbol Z}}

\theoremstyle{plain}
\newtheorem{theorem}{Theorem}
\newtheorem{corollary}[theorem]{Corollary}
\newtheorem{lemma}[theorem]{Lemma}

\newtheorem*{ack}{Acknowledgement}

\newtheorem{proposition}[theorem]{Proposition}
\theoremstyle{definition}
\newtheorem{alg}{Algorithm}
\newtheorem*{inp}{Input}
\newtheorem*{out}{Output}
\newtheorem{remark}[theorem]{Remark}
\newtheorem{Example}{Example}

\newtheorem*{alp}{Proof of correctness of Algorithm 1}

\begin{document}

\title[ Monomial Gotzmann sets in a quotient by a pure power  ]
{  Monomial Gotzmann sets in a quotient by a pure power    }
\author{Ata Firat P\.ir}
\author{M\"uf\.it Sezer}

\address { Department of Mathematics, Bilkent University,
 Ankara 06800 Turkey}
\email{pir@fen.bilkent.edu.tr} \email{sezer@fen.bilkent.edu.tr}
\thanks{The authors are  supported by T\"{u}bitak-Tbag/109T384 and
the   second author is also partially supported by
T\"{u}ba-Gebip/2010.}

\subjclass[2000]{13F20,  13D40} \keywords{Gotzmann sets,
Macaulay-Lex rings}

\begin{abstract}
A homogeneous set of monomials  in a quotient of the polynomial
ring $S:=F[x_1, \dots, x_n]$ is called Gotzmann if the size of
this set grows minimally when multiplied with the variables. We
note that  Gotzmann sets in the quotient $R:=F[x_1, \dots,
x_n]/(x_1^a)$ arise from certain Gotzmann sets in $S$.
Then we partition the monomials in a  Gotzmann set in $S$ with
respect to the multiplicity of $x_i$ and
 show that if the growth of
the  size of a component is
  larger  than the size of a neighboring component, then this  component is a multiple of a
Gotzmann set in $F[x_1, \dots, x_{i-1}, x_{i+1}, \dots  ,x_n]$.
We also  adopt  some properties of the minimal growth of the
Hilbert function in $S$ to $R$.
\end{abstract}

 \maketitle
 \section*{ Introduction}
Let $S=F[x_1, \dots, x_n]$ be a polynomial ring over a field $F$
with $\deg (x_i)=1$ for $1\le i\le n$.   We use the lexicographic
order on $S$ with $x_1> \dots >x_n$. For a homogeneous ideal $I$
in $S$, the Hilbert function $H(I,-):\BZ_{\ge 0}\rightarrow
\BZ_{\ge 0}$ of $I$ is the numerical function defined by
$H(I,t)=\dim_FI_t$, where $I_t$ is the homogeneous component of
degree $t$ of $I$. A set $M$ of monomials in $S$ is called
lexsegment if for monomials $m\in M$ and $v\in S $ we have: If
$\deg m=\deg v$ and $v>m$, then $v\in M$. A monomial ideal $I$ is
called lexsegment if the set of monomials in $I$ is lexsegment.
  For a set of monomials $M$ in the
homogeneous component $S_t$ of degree $t$ in $S$, let $\lex_S (M)$
denote the lexsegment set of $|M|$ monomials in $S_t$.
Also for a set of monomials $M$, $S_1\cdot M$ denotes the set of
monomials of the form $um$, where $u$ is a variable and $m\in M$.
By a classical theorem of Macaulay \cite[C4]{MR1735271} we have
\begin{equation}
\label{ana} |(S_1\cdot \lex_S (M))|\le |(S_1\cdot M)|.
\end{equation}
Since the Hilbert function of a homogeneous ideal is the same as
the Hilbert function of its lead term ideal this inequality
implies that for each homogeneous ideal in $S$ there is a
lexsegment ideal with the same Hilbert function. One course of
research inspired by Macaulay's theorem is the study of the
homogeneous ideals $I$ such that every Hilbert function in $S/I$
is obtained by a lexsegment ideal in $S/I$. Such quotients are
called Macaulay-Lex rings. A well-known  example is $S/(x_1^{a_1},
\dots , x_n^{a_n}) $ with $a_1\le \dots \le a_n\le \infty$ and
$x_i^\infty=0$ which is due to Clements and Lindstr\" {o}m
\cite{MR0246781}. These rings have important applications in
combinatorics and algebraic geometry. For a good account of these
matters and basic properties of Macaulay-Lex rings we direct the
reader to Mermin and Peeva \cite{MR2231127}, \cite{MR2329561}.
Some recently discovered classes    of Macaulay-Lex rings can be
found in Mermin and Murai \cite{MR2300000}.

Monomial sets in $S$ whose sizes grow minimally in the sense of
Macaulay's inequality have also attracted attention: A homogeneous
set $M$ of monomials is called Gotzmann if $|(S_1\cdot \lex_S
(M)|= |(S_1\cdot M)|$ and a monomial ideal $I$ is Gotzmann if the
set of monomials in $I_t$ is a Gotzmann set for all $t$. In
\cite{MR2434473}, Gotzmann ideals in $S$ that are generated by at
most $n$ homogeneous polynomials are classified in terms of their
Hilbert functions.
 In \cite{MR2379725} Murai finds all integers $j$
such that every  Gotzmann set of size $j$ in $S$ is lexsegment up
to a permutation. He also classifies all  Gotzmann sets for $n\le
3$. Gotzmann persistence theorem states that if $M$ is a Gotzmann
set in $S$, then $S_1\cdot M$ is also a Gotzmann set, see
\cite{MR0480478}. In \cite{MR2368639} Murai gives a combinatorial
proof of this theorem using binomial representations. He derives
some properties of these representations which provide information
on the  growth of the Hilbert functions. Among other related
works, Aramova, Herzog and Hibi obtains Macaulay's and Gotzmann's
theorems for exterior algebras, \cite{MR1444495}. More recently,
Hoefel shows that the only edge ideals that are Gotzmann are the
ones that arise from star graphs, see \cite{211000}. Also some
results on generation of lexsegment and Gotzmann ideals by
invariant monomials can be found in \cite{MR11}.

In this paper we study  the Gotzmann sets and the minimal growth
of the Hilbert function in the
 Macaulay-Lex quotient $R:=F[x_1, \dots, x_n]/(x_1^a)$, where $a$
 is a positive integer. A set $M$ of monomials in $R$ can also be considered as a
 set of monomials in $S$ and by $R_1\cdot M$ we mean the set of monomials
 in $S_1\cdot M$ that are not zero in $R$. A set $M$ of
 monomials in $R_t$ is Gotzmann if $|(R_1\cdot \lex_R (M))|= |(R_1\cdot
 M)|$, where $R_t$ is homogeneous component of degree $t$ of $R$ and $\lex_R (M)$ denotes the lexsegment set of monomials
 in $R_t$ that has the same size as $M$.
We show that Gotzmann sets in $R$ arise from certain
Gotzmann sets in S: When a Gotzmann set in $R_t$ with $t\ge a$ is
added to the set of monomials in $S_t$ that are divisible by
$x_1^a$, one gets a Gotzmann set in $S_t$.
Then we partition the monomials in a  Gotzmann set in $S$ with
respect to the multiplicity of $x_i$ and
 show that if the growth of
the  size of a component is
  larger  than the size of a neighboring component, then this  component is a multiple of a
Gotzmann set in $F[x_1, \dots, x_{i-1}, x_{i+1}, \dots  ,x_n]$.
 Otherwise we obtain lower bounds on the size of the
component in terms of sizes of neighboring components.    We also
note own adoptions of some properties concerning the minimal
growth of the Hilbert function in $S$ to $R$.

For a general reference for Hilbert functions and Gotzmann ideals
we recommend \cite{MR1251956} and \cite{MR1648665}.
\section*{  Gotzmann sets   in  $F[x_1, \dots, x_n]/(x_1^a)$ }

We continue with the notation and the convention of the previous
section. For a homogeneous lexsegment set $L$ in $S$ with $|L|=d$,
  the size of $S_1\cdot L$ was computed by Macaulay. This number is very closely
related to the $n$-th binomial representation of $d$ and is
denoted by $d^{<n-1>}$. We refer the reader to \cite[\S
4]{MR1251956} for more information on this number. In contrast to
the situation in $S$, for the homogeneous lexsegment set
$L\subseteq R_t$ of size $d$,  the size of the set $R_1\cdot L$
depends also on $t$. We let $d_{n,t}$ denote this size. In the
sequel when we talk about $d_{n,t}$ we will always assume that $d$
is smaller than the number of monomials in $R_t$  because
otherwise $d_{n,t}$ is not defined.  Notice that we have
$d_{n,t}=d^{<n-1>}$ for $t< a-1$. For a non-negative integer $i$,
let $S_t^i$ and $R_t^i$ denote the set of monomials in $S_t$ and
$R_t$ respectively that are divisible by $x_1^i$
 but not by $x_1^{i+1}$. For a set of monomials $M$ in $R_t$, let
$M^i$ denote the set $R_t^i\cap M$. Similarly, if $M$ is in $S_t$,
then $M^i$ denotes $S_t^i\cap M$. Also let $I(M)$ denote the
smallest integer such that $M^{I(M)}\neq \emptyset$. Set
$S'=F[x_2, \dots, x_n]$ and let $S'_1\cdot M$ denote the set of
monomials of the form $x_im$, where $2\le i\le n$ and $m\in M$.
For a monomial $u\in R$ and a monomial set $M$ in $R$ we let
$u\cdot M$ denote the set of monomials in $R$ that are of the form
$um$ with $m\in M$.  We also let $M^i/x_1$ denote the set of
monomials $m$ in $S'$ such that $mx_1^i\in M^i$.
\begin{lemma}
\label{ilk}Let $L$ be the lexsegment set of size $d$ in $R_t$ with
$t\ge a-1$ and $j$  denote $I(L)$.
 Then  $$d_{n,t}=\sum_{j\le i\le a-1}|L^i|^{<n-2>}.$$
In particular, we have $d_{n,t}\ge d^{<n-2>}$. Moreover,
$d_{n,h}>d^{<n-2>}$ for $0\le h<a-1$.
\end{lemma}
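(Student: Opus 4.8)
The plan is to analyze the lexsegment set $L \subseteq R_t$ by slicing it according to the multiplicity of $x_1$, i.e.\ via the decomposition $L = \bigcup_{j \le i \le a-1} L^i$ where $j = I(L)$. The key observation is that since $L$ is lexsegment in $R_t$ and the lex order has $x_1 > \dots > x_n$, each slice $L^i/x_1$ is a lexsegment set in $S'_{t-i} = F[x_2,\dots,x_n]_{t-i}$; moreover for $i$ strictly between $j$ and $a-1$ the slice is \emph{full}, meaning $L^i/x_1 = S'_{t-i}$, while only the top slice $L^j/x_1$ can be a proper initial segment (and $L^{a-1}/x_1$ is either full or empty depending on $d$). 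When we multiply $L$ by a variable inside $R$, multiplying by $x_1$ raises the $x_1$-multiplicity by one (and kills the $x_1^{a-1}$ slice), while multiplying by $x_2,\dots,x_n$ acts within each slice via $S'_1 \cdot (L^i/x_1)$. The monomials of $R_1 \cdot L$ in $x_1$-degree $i$ are therefore exactly $\bigl(S'_1 \cdot (L^i/x_1)\bigr) \cup (L^{i-1}/x_1)$ (the second term present only when $i-1 \ge j$), and because $L^{i-1}/x_1$ is lexsegment in $S'_{t-(i-1)}$ of degree one less, it is already contained in $S'_1 \cdot (L^i/x_1)$ once $L^i/x_1$ is the full space or a sufficiently long lexsegment — one checks this containment holds in all the relevant cases. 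Hence $|R_1 \cdot L| = \sum_{j \le i \le a-1} |S'_1 \cdot (L^i/x_1)| = \sum_{j \le i \le a-1} |L^i|^{<n-2>}$, since $S' = F[x_2,\dots,x_n]$ has $n-1$ variables and Macaulay's growth function there is the $<(n-1)-1> = <n-2>$ operation.

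**The first two bounds.** Given the formula $d_{n,t} = \sum_{j \le i \le a-1} |L^i|^{<n-2>}$, the inequality $d_{n,t} \ge d^{<n-2>}$ follows from superadditivity of Macaulay's function: $|A|^{<n-2>} + |B|^{<n-2>} \ge (|A|+|B|)^{<n-2>}$, applied inductively across the slices, using $d = \sum_i |L^i|$. This is a standard property of the $<r>$ operation; I would cite \cite[\S4]{MR1251956} or derive it from the fact that concatenating lexsegments in $S'$ and comparing with the single lexsegment of the combined size can only decrease the growth. Equality forces all but one slice to be empty, i.e.\ $t < a-1$ is impossible here since $t \ge a-1$ already — but the point is just the inequality.

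**The strict inequality for small degree.** For $0 \le h < a-1$, the power $x_1^a$ imposes no relations in degree $h$ or $h+1$, so $R_h = S_h$ and $R_{h+1} = S_{h+1}$, hence $d_{n,h} = d^{<n-1>}$. So the claim reduces to $d^{<n-1>} > d^{<n-2>}$ whenever $d \ge 1$. This is immediate from the binomial representation: writing $d = \binom{k_{n-1}}{n-1} + \binom{k_{n-2}}{n-2} + \cdots$, one has $d^{<n-1>} = \binom{k_{n-1}+1}{n} + \binom{k_{n-2}+1}{n-1} + \cdots$ while $d^{<n-2>}$ uses the $(n-2)$-representation, which has no larger terms; the inequality $\binom{k+1}{n} \ge \binom{k}{n-1}$ with strictness coming from the presence of at least one positive term, or more simply the monotonicity $d^{<r>}$ in $r$ noted in \cite[\S4]{MR1251956}. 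I would spell this out with one or two lines of binomial arithmetic.

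**Main obstacle.** The genuinely delicate point is the containment $L^{i-1}/x_1 \subseteq S'_1 \cdot (L^i/x_1)$ that makes the slices of $R_1 \cdot L$ disjoint in the clean way needed for the sum formula — i.e.\ verifying that multiplying the lower lex-slice by $x_1$ contributes nothing new beyond what the variables $x_2,\dots,x_n$ already produce from the slice above it. For the \emph{interior} slices this is automatic because $L^i/x_1 = S'_{t-i}$ is the whole space, so $S'_1 \cdot (L^i/x_1) = S'_{t-i+1}$ swallows everything; the only real work is at $i = j+1$, where $L^j/x_1$ is a proper lexsegment, and one must check that the lexsegment $L^j/x_1$ of degree $t-j$ has $S'_1 \cdot (L^j/x_1)$ containing the \emph{full} space $S'_{t-j+1}$ is false in general — rather one compares $L^j/x_1$ (lex of degree $t-j$) with $L^{j+1}/x_1 = S'_{t-j-1}$ and needs $S'_1 \cdot S'_{t-j-1} \supseteq L^j/x_1$ as sets of degree $t-j$, which holds because $S'_1 \cdot S'_{t-j-1} = S'_{t-j}$ is everything. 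So in fact every cross-term is absorbed and the formula is clean; the thing to be careful about is just the bookkeeping of which index each product lands in and the edge case $|L^{a-1}| $ possibly zero, i.e.\ $d$ small enough that $L$ does not reach $x_1$-degree $a-1$ — in that case the top of the sum is really $I(L^{\max})$ and the stated range $j \le i \le a-1$ is read with empty slices contributing $0$, consistent with $0^{<n-2>} = 0$.
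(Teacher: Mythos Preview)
Your proposal is correct and follows essentially the same route as the paper: slice $L$ by $x_1$-multiplicity, use that $L^i=R_t^i$ for $j<i\le a-1$ to get $x_1\cdot L^{i-1}\subseteq S'_1\cdot L^i$, conclude $R_1\cdot L=\bigsqcup_i S'_1\cdot L^i$, and then invoke superadditivity of $(-)^{\langle n-2\rangle}$ (the paper's citation \cite[1.5]{MR2368639}) and $d^{\langle n-1\rangle}>d^{\langle n-2\rangle}$ (\cite[1.7]{MR2368639}) for the two inequalities. One small inaccuracy: your worry that $L^{a-1}$ might be empty is unfounded---for a lexsegment $L\subseteq R_t$ with $t\ge a-1$ and $d\ge 1$, the lex-largest monomials carry $x_1^{a-1}$, so $L^{a-1}\neq\emptyset$ always, and only the bottom slice $L^j$ can be partial.
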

\begin{proof}
Since $d_{n,h}=d^{<n-1>}$ for $h< a-1$, the  final statement is
precisely \cite[1.7]{MR2368639}.

 Since $L$ is
lexsegment, we have $L^i=R_t^i$  for $j< i\le a-1$ giving
$x_1\cdot L^i\subseteq S'_1 \cdot L^{i+1}$ for $j\le i <a-1$.
 Moreover $x_1\cdot L^{a-1}$ is empty and
so we get
\begin{equation*}R_1\cdot L=\bigsqcup_{j\le i\le a-1} S'_1 \cdot
L^i.\end{equation*}
 Note that
 $|L^i/x_1|=|L^i|$ and that  $L^i/x_1$ is a lexsegment set in $S'$.
 Therefore $|S_1'\cdot L^i|=|S_1'\cdot (L^i/x_1)|$ and $|S_1'\cdot
 (L^i/x_1)|=|L^i|^{<n-2>}$. It follows that $d_{n,t}=|R_1\cdot
 L|=\sum_{j\le i\le a-1}|L^i|^{<n-2>}$, as desired. Finally, since $\sum_{j\le i\le
 a-1}|L^i|=d$ we also have $\sum_{j\le i\le
 a-1}|L^i|^{<n-2>}\ge d^{<n-2>}$
   by \cite[1.5]{MR2368639} with equality only if
  $j=a-1$.
\end{proof}
\begin{lemma}
\label{iki} Let $M$ be a set of monomials in $R_t$ with $t\ge a$.
Let $B$ denote the set of monomials in $S_t$ that are divisible by
 $x_1^a$.   We have the disjoint union  $$S_1\cdot (B\sqcup M)=S_1\cdot
   B\sqcup R_1\cdot M.$$
Therefore
 $d_{n,t}=(d+|B|)^{<n-1>}-|B|^{<n-1>}$. In
 particular, $d_{n,t}<d^{<n-1>}$.
\end{lemma}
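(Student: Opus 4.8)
The plan is to establish the disjoint union first, and then derive the two size formulas as immediate consequences.

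For the disjoint union $S_1\cdot(B\sqcup M)=S_1\cdot B\sqcup R_1\cdot M$, I would argue as follows. Every monomial in $S_1\cdot(B\sqcup M)$ is of the form $x_k m$ with $1\le k\le n$ and $m\in B\cup M$. If $m\in B$, then $x_k m$ is divisible by $x_1^a$, so it lies in $S_1\cdot B$. If $m\in M$, then $m$ is not divisible by $x_1^a$ (since $m$ represents a nonzero monomial of $R_t$), so $m$ is divisible by $x_1^i$ for some $i\le a-1$. If $k\ne 1$ then $x_k m$ is still not divisible by $x_1^a$, hence $x_k m\in R_1\cdot M$. If $k=1$, then $x_1 m$ is divisible by $x_1^{i+1}$ with $i+1\le a$; either $i+1\le a-1$, in which case $x_1 m\in R_1\cdot M$, or $i+1=a$, in which case $x_1 m\in B\subseteq S_1\cdot B$ (here we use $t\ge a$, so that $x_1^a$ actually divides a monomial of degree $t+1$ and $B$ is the relevant "boundary" — more precisely $x_1 m$ has degree $t+1\ge a+1$ and is divisible by $x_1^a$). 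Thus $S_1\cdot(B\sqcup M)\subseteq S_1\cdot B\cup R_1\cdot M$, and the reverse inclusion is clear since $B\subseteq B\sqcup M$ and $M\subseteq B\sqcup M$ with, for the latter, $R_1\cdot M\subseteq S_1\cdot M$. For disjointness: a monomial in $S_1\cdot B$ is divisible by $x_1^a$ (as each monomial in $B$ already is), while a monomial in $R_1\cdot M$ is by definition not zero in $R$, i.e.\ not divisible by $x_1^a$; so the two sets are disjoint.

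Now for the counting consequences. The set $B\sqcup M$ is a set of $|B|+d$ monomials in $S_t$, so $|S_1\cdot(B\sqcup M)|\ge (|B|+d)^{<n-1>}$ by Macaulay's inequality~\eqref{ana}; and in fact I would like equality here. To get equality I would observe that $B\sqcup M$ contains the lexsegment set $B$ of all monomials of $S_t$ divisible by $x_1^a$ — note $B$ consists precisely of the top $|B|$ monomials in the lex order on $S_t$ — and then invoke the fact (or argue directly, as in Lemma~\ref{ilk}) that adding a lexsegment "head" $B$ to $M$ can only help: more carefully, $B$ itself is Gotzmann and $S_1\cdot B$ is the lexsegment set of monomials of $S_{t+1}$ divisible by $x_1^{a}$... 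Hmm, the cleanest route is: $|S_1\cdot(B\sqcup M)|=|S_1\cdot B|+|R_1\cdot M|=|B|^{<n-1>}+d_{n,t}$ by the disjoint union, and separately, since $\lex_R(M)$ gives $|R_1\cdot \lex_R(M)|\le |R_1\cdot M|$ is false in general — rather the Gotzmann/lexsegment comparison goes the other way. Let me instead just use the disjoint union together with the observation that $B\sqcup \lex_R(M)$ is itself lexsegment in $S_t$ (it is the top $|B|+d$ monomials), hence $|S_1\cdot(B\sqcup\lex_R(M))|=(|B|+d)^{<n-1>}$ by Macaulay applied with equality for lexsegments; combined with the disjoint union applied to $\lex_R(M)$ this yields $d_{n,t}=|R_1\cdot\lex_R(M)|=(d+|B|)^{<n-1>}-|B|^{<n-1>}$, which is the claimed formula.

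Finally, $d_{n,t}<d^{<n-1>}$ follows from the strict superadditivity of the function $e\mapsto e^{<n-1>}$, i.e.\ $(d+|B|)^{<n-1>}<d^{<n-1>}+|B|^{<n-1>}$ whenever $d,|B|>0$; this is a standard property of the $n$-th binomial representation (the relevant strict inequality is exactly the kind recorded in \cite[1.5]{MR2368639}, used with equality failing because $|B|>0$). Since $t\ge a$ forces $|B|>0$ (there are monomials of degree $t$ divisible by $x_1^a$), the inequality is strict. \textbf{The main obstacle} I anticipate is the bookkeeping in the disjoint-union step — specifically making sure the boundary case $k=1$, $i=a-1$ is handled correctly so that multiplication by $x_1$ of a top-degree-in-$x_1$ monomial of $M$ lands in $B$ and not somewhere unaccounted for — and, to a lesser extent, pinning down the precise superadditivity statement for $\langle n-1\rangle$ and confirming strictness; everything else is formal.
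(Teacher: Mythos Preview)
Your approach matches the paper's: establish the disjoint union, specialize to $M=\lex_R(M)=:L$, observe that $B\sqcup L$ is lexsegment in $S_t$ (so $|S_1\cdot(B\sqcup L)|=(|B|+d)^{\langle n-1\rangle}$), subtract $|S_1\cdot B|=|B|^{\langle n-1\rangle}$, and get the strict inequality from \cite[1.5]{MR2368639}.

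There is one slip to fix. In the boundary case $k=1$, $i=a-1$ you write ``$x_1 m\in B\subseteq S_1\cdot B$'', but $B\subset S_t$ while $x_1 m\in S_{t+1}$, so neither containment holds. The correct step (which is what the paper does and what your parenthetical is reaching for) is: $x_1 m$ has degree $t+1\ge a+1$ and is divisible by $x_1^a$, so $(x_1 m)/x_1^a$ has degree $\ge 1$ and is therefore divisible by some variable $x_j$; then $(x_1 m)/x_j$ has degree $t$ and is still divisible by $x_1^a$, i.e.\ lies in $B$, whence $x_1 m = x_j\cdot\bigl((x_1 m)/x_j\bigr)\in S_1\cdot B$. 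This is exactly the ``main obstacle'' you flagged, and this one line closes it.
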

\begin{proof}
 Since $t\ge a$, $B$ is non-empty. Note also that $B$ is a lexsegment set in $S$ because $x_1$ is
 the highest ranked variable.
  Meanwhile    no monomial in $M$ is divisible by
  $x_1^a$ and hence $M$ and $B$ are disjoint sets. Since $R_1\cdot
  M$ is the set of monomials in $S_1\cdot M$ that are not divisible
  by $x_1^a$, we clearly have $S_1\cdot (B\sqcup M)\supseteq S_1\cdot
   B\sqcup R_1\cdot M$. Conversely, let $m$ be a monomial in $S_1\cdot (B\sqcup
   M)$. We may take $m\in  (S_1\cdot M) \setminus (R_1\cdot M)$.
   Then $m$ is divisible by $x_1^a$ and since the degree of $m$ is
   at least $a+1$, $m/x_1^a$ is divisible by one of the variables,
   say $x_i$. Then $m=x_i(m/x_i)\in S_1\cdot B$.

   If $L$ is the lexsegment set of size $d$ in $R_t$, then we just
   showed that $|S_1\cdot (B\sqcup L)|=|S_1\cdot
   B|+ |R_1\cdot L|$. Moreover, since
  the maximal monomial in $R_t$ that is not in $B$ is the maximal monomial in $L$, we get that
$L \sqcup B$ is a  lexsegment set in $S$. It follows that
$d_{n,t}=|R_1\cdot L|=(d+|B|)^{<n-1>}-|B|^{<n-1>}$, as desired.
The last
 statement now follows from \cite[1.5]{MR2368639}.
\end{proof}

We show that Gotzmann sets in $R_t$ for $t\ge a$ arise from
Gotzmann sets in $S_t$ that contain $B$.

\begin{theorem}
\label{dusurme} Let $M$ be a  set of monomials in $R_t$ for $t\ge
a$. Then $M$ is Gotzmann in $R_t$ if and only if $B\sqcup M$ is
Gotzmann in $S_t$.
\end{theorem}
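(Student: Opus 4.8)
The plan is to deduce the equivalence formally from Lemma~\ref{iki}, which does the real work. Put $d=|M|$, so that $|B\sqcup M|=d+|B|$, and recall (as noted in the proof of Lemma~\ref{iki}) that $B$ is a lexsegment set in $S_t$, being the block of the $|B|$ largest monomials of $S_t$ since $x_1$ is the top-ranked variable. In particular $|S_1\cdot B|=|B|^{<n-1>}$, and $|S_1\cdot\lex_S(B\sqcup M)|=(d+|B|)^{<n-1>}$, the latter because in $S$ the shadow size of a lexsegment depends only on its cardinality.

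Next I would feed the disjoint union of Lemma~\ref{iki} into the definition of Gotzmann. From that lemma,
\[
|S_1\cdot(B\sqcup M)|=|S_1\cdot B|+|R_1\cdot M|=|B|^{<n-1>}+|R_1\cdot M|.
\]
Hence $B\sqcup M$ is Gotzmann in $S_t$, that is, $|S_1\cdot(B\sqcup M)|=|S_1\cdot\lex_S(B\sqcup M)|$, precisely when $|R_1\cdot M|=(d+|B|)^{<n-1>}-|B|^{<n-1>}$. By the final formula of Lemma~\ref{iki} the right-hand side is exactly $d_{n,t}=|R_1\cdot\lex_R(M)|$, and by definition $M$ is Gotzmann in $R_t$ iff $|R_1\cdot M|$ equals this number. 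Chaining the two equivalences yields the theorem in both directions at once.

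So I do not expect a genuine obstacle here; the statement is essentially a repackaging of Lemma~\ref{iki}. The only points that need a word of justification are the identity $|S_1\cdot B|=|B|^{<n-1>}$ (which rests on $B$ being lexsegment in $S$, equivalently on $B=\lex_S(B)$) and the degenerate case $d=\dim_F R_t$: there $B\sqcup M=S_t$ and $M=R_t$ are both trivially Gotzmann, so one may treat it separately or simply assume $d<\dim_F R_t$, as is done elsewhere in the paper, so that $\lex_R(M)$ and $d_{n,t}$ are defined.
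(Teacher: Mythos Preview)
Your proof is correct and follows essentially the same route as the paper: both arguments reduce the equivalence directly to Lemma~\ref{iki}. The paper phrases it by applying the disjoint union of Lemma~\ref{iki} to both $M$ and $L=\lex_R(M)$ and noting $B\sqcup L$ is lexsegment in $S_t$, while you unpack the same identity via the numerical formula $d_{n,t}=(d+|B|)^{<n-1>}-|B|^{<n-1>}$; these are cosmetic variants of the same computation.
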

\begin{proof}
Let $L$ denote the lexsegment set in $R_t$ of the same size as
$M$. Then Lemma \ref{iki} implies that $|R_1\cdot L|=|R_1\cdot M|$
if and only  if $|S_1\cdot (B\sqcup M)|=|S_1\cdot (B\sqcup L)|$.
Hence the statement of the proposition follows because $B\sqcup L$
is lexsegment in $S_t$ as we saw in the proof of Lemma \ref{iki}.
\end{proof}
\begin{remark}
\label{a-1} This theorem does not generalize to all Macaulay-Lex
quotients.
 Consider
the Gotzmann set $A:=\{x_1^3x_2, x_1^3x_3, x_1x_2^3, x_2^3x_3 \}$
in $F[x_1, x_2, x_3]/(x_1^4, x_2^4)$. Then the set $A\cup \{ x_2^4
\}$ is not Gotzmann in $F[x_1, x_2, x_3]/(x_1^4)$. Furthermore
$A\cup \{ x_1^4, x_2^4 \}$ is not Gotzmann in $F[x_1, x_2, x_3]$.

For $t=a-1$,  have $d_{n,a-1}=d^{<n-1>}-1$. Hence if a set of $M$
of monomials does not contain $x_1^{a-1}$, then $R_1\cdot
M=S_1\cdot M$ and so $|R_1\cdot M|>d_{n, a-1}$. On the other hand
if $x_1^{a-1}\in M$, then $|R_1\cdot M|=|S_1\cdot M|-1$. It
follows that $M$ is Gotzmann in $R_{a-1}$ if and only if $M$ is
Gotzmann in $S_{a-1}$ and $x_1^{a-1}\in M$.
\end{remark}
We prove a result on Gotzmann sets in $S$.   Let $M$ be a Gotzmann
set in $S_t$. We show that $M^i$ is a product of $x_1^i$ with a
Gotzmann set in $S'$ if $|M^i|^{<n-2>}$ is  larger than
$|M^{i-1}|$. Otherwise we provide lower bounds on the size of
$M^i$ depending on the sizes of neighboring components. We first
prove the following.
\begin{lemma}
Let $M$ be a Gotzmann set of monomials in $S_t$ with $t\ge 0$.
For $0\le i\le t$ set $d_i=|M^i|$. For $0\le i \le t+1$ we have
$$ |(S_1\cdot M)^i|= \max \{ d_i^{<n-2>}, d_{i-1}\}.$$
\end{lemma}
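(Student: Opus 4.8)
The plan is to prove the inequality ``$\ge$'' for an arbitrary monomial set, then force equality in the Gotzmann case by comparing, over all $i$ at once, the sum of the right-hand sides with $d^{<n-1>}$, where $d=|M|$; throughout, $d_{-1}=d_{t+1}=0$ and $0^{<n-2>}=0$. First I would record, for any set $N$ of monomials in $S_t$, the decomposition $(S_1\cdot N)^i=(x_1\cdot N^{i-1})\cup(S'_1\cdot N^i)$ --- a monomial of $S_1\cdot N$ has $x_1$-degree exactly $i$ iff it is $x_1m$ with $m\in N^{i-1}$ or $x_jm$ with $2\le j\le n$ and $m\in N^i$. Multiplication by $x_1$ is injective, so $|x_1\cdot N^{i-1}|=|N^{i-1}|$; and since $N^i/x_1\subseteq S'_{t-i}$, Macaulay's inequality \eqref{ana} applied in $S'=F[x_2,\dots,x_n]$ gives $|S'_1\cdot N^i|=|S'_1\cdot(N^i/x_1)|\ge|N^i|^{<n-2>}$. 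As the union contains each of the two pieces, $|(S_1\cdot N)^i|\ge\max\{|N^i|^{<n-2>},|N^{i-1}|\}$; with $N=M$ this is one half of the lemma.

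Next I would produce, for the fixed sequence $(d_0,\dots,d_t)$, an auxiliary set $M_0\subseteq S_t$ attaining this bound in \emph{every} component at once. Put $M_0^i:=x_1^i\cdot L_i$, where $L_i$ is the lexsegment set of size $d_i$ in $S'_{t-i}$ (legitimate since $d_i=|M^i|\le|S_t^i|=|S'_{t-i}|$), and $M_0:=\bigsqcup_i M_0^i$, so $|M_0|=d$. Since $M_0^i/x_1=L_i$ is lexsegment, $|S'_1\cdot M_0^i|=d_i^{<n-2>}$ \emph{exactly}; and after cancelling the common factor $x_1^i$, the two sets $x_1\cdot M_0^{i-1}$ and $S'_1\cdot M_0^i$ become lexsegment sets in $S'_{t-i+1}$ --- here using the standard fact that $S'_1$ times a lexsegment set is again lexsegment --- so one contains the other and $|(S_1\cdot M_0)^i|=\max\{d_i^{<n-2>},d_{i-1}\}$. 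Summing over $i$ and applying \eqref{ana} once more yields
$$\sum_i\max\{d_i^{<n-2>},d_{i-1}\}=|S_1\cdot M_0|\ge|S_1\cdot\lex_S(M_0)|=d^{<n-1>}.$$

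Finally, summing the inequality of the first paragraph over $i$ and chaining with the display above,
$$|S_1\cdot M|=\sum_i|(S_1\cdot M)^i|\ge\sum_i\max\{d_i^{<n-2>},d_{i-1}\}\ge d^{<n-1>},$$
while $M$ being Gotzmann means precisely $|S_1\cdot M|=|S_1\cdot\lex_S(M)|=d^{<n-1>}$. Hence every inequality above is an equality, and since each summand on the far left dominates the matching summand in the middle while the two sums agree, $|(S_1\cdot M)^i|=\max\{d_i^{<n-2>},d_{i-1}\}$ for all $i$, as claimed. The step demanding care is the construction of $M_0$: realizing the componentwise minimum simultaneously relies on the two contributions to $(S_1\cdot M_0)^i$ being nested rather than merely overlapping, i.e.\ on $S'_1\cdot(\text{lexsegment})$ being again a lexsegment.
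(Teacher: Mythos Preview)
Your proof is correct and follows essentially the same approach as the paper: both establish the lower bound $|(S_1\cdot M)^i|\ge\max\{d_i^{<n-2>},d_{i-1}\}$ via the decomposition $(S_1\cdot N)^i=(x_1\cdot N^{i-1})\cup(S'_1\cdot N^i)$, then build the same auxiliary set (your $M_0$, the paper's $T$) by replacing each layer with its $S'$-lexsegment, show the bound is attained there via the nesting of lexsegments, and finally squeeze using the Gotzmann hypothesis. The only cosmetic difference is that you route the squeeze through $d^{\langle n-1\rangle}$ explicitly, whereas the paper compares $|S_1\cdot M|$ directly to $|S_1\cdot T|$ by minimality of growth.
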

\begin{proof}
For a set of monomials $K$ in $S_t$ and a monomial $u\in S$, let
$u\cdot K$ denote the set of monomials $uk$, where $k\in K$. Note
that a monomial in $(S_1\cdot K)^i$ is either product of a
variable in $S'$ with a monomial in $K^i$ or a product of $x_1$
with a monomial in $K^{i-1}$. It follows that $(S_1\cdot
K)^i=S_1'\cdot K^i \cup x_1\cdot K^{i-1}$. We also have $S_1'\cdot
K^i=S_1'\cdot (x_1^i\cdot (K^i/x_1))=x_1^i\cdot (S_1' \cdot
(K^i/x_1) )$. Applying this to the set $M$ we get that the size of
the set $S_1'\cdot M^i$ is equal to the size of $S_1' \cdot
(M^i/x_1)$ which is at least $d_i^{<n-2>}$ by Macaulay's theorem.
Meanwhile the size of the set $x_1\cdot M_{i-1}$ is $d_{i-1}$. It
follows for all $0\le i\le t+1$ that
\begin{equation}
\label{kucuk} |(S_1\cdot M)^i|\ge \max \{d_i^{<n-2>}, d_{i-1} \}.
\end{equation}
 Define
\begin{equation*}T=\bigsqcup_{0\le i\le t}x_1^i\cdot
(\lex_{S'}(M^i/x_1)).\end{equation*}
  Notice that we have $|T^i|=d_i$ for $0\le i\le t$. We compute $|(S_1 \cdot T)^i|$ for $0\le i\le t+1$
  as follows.
 From the first paragraph of the proof we have $(S_1\cdot T)^i=S_1'\cdot T^i \cup x_1\cdot
 T^{i-1}$
 and that $S_1'\cdot T^i=x_1^i\cdot (S_1'
\cdot (T^i/x_1) )$. But $T^i/x_1$ is a homogeneous lexsegment set
by construction and so $S_1' \cdot (T^i/x_1)$ is also a lexsegment
set   in $S'_{t-i+1}$, see \cite[4.2.5.]{MR1251956}. Hence
$|S_1'\cdot T^i|=d_i^{<n-2>}$. On the other hand $|x_1\cdot
T^{i-1}|=d_{i-1}$. Moreover, since $T^{i-1}/x_1$ is a lexsegment
set in $S'_{t-i+1}$, the identity $x_1\cdot T^{i-1}=x_1^i\cdot
(T^{i-1}/x_1)$ gives that $x_1\cdot T^{i-1}$ is obtained by
multiplying each element in a homogeneous lexsegment set in $S'$
with $x_1^i$. Since $S_1'\cdot T^i$ is also obtained by
multiplying the lexsegment set $S_1' \cdot (T^i/x_1)$ with $x_1^i$
we have either $S_1'\cdot T^i\subseteq x_1\cdot T^{i-1}$ or
$S_1'\cdot T^i\supseteq x_1\cdot T^{i-1}$. Hence $(S_1\cdot
T)^i=S_1'\cdot T^i $ if $d_i^{<n-2>}\ge d_{i-1}$ and $(S_1\cdot
T)^i=x_1\cdot
 T^{i-1}$ otherwise. Moreover, $|(S_1\cdot T)^i|=\max \{d_i^{<n-2>}, d_{i-1}
 \}$. Since the size of $M$ has the minimal possible growth,
 from Inequality \ref{kucuk} we get $|(S_1\cdot M)^i|=\max \{d_i^{<n-2>}, d_{i-1}
 \}$ as desired.
\end{proof}

 We remark
 that the statement of the following theorem    stays true if we permute the variables and
write  the assertion with respect to another variable. It is also
instructive to compare this with \cite[2.1]{MR2379725}.

\begin{theorem}
\label{gotz}  Assume the notation of the previous lemma.  If
$d_i^{<n-2>}\ge d_{i-1}$, then $M^i/x_1$ is Gotzmann in $S'$.
  Moreover, if
$d_i^{<n-2>}< d_{i-1}$, then we have either
$(d_i+1)^{<n-2>}>d_{i-1}-1$ or $d_i+1>d_{i+1}^{<n-2>}$.
\end{theorem}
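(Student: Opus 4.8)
The plan is to obtain the first assertion directly from the previous lemma together with Macaulay's theorem in the $(n-1)$-variable ring $S'=F[x_2,\dots,x_n]$, and to prove the second assertion by contradiction, perturbing the $x_1$-degree profile $(d_0,\dots,d_t)$ of $M$ by lowering $d_{i-1}$ by one and raising $d_i$ by one. For the first assertion: from the proof of the previous lemma $(S_1\cdot M)^i\supseteq S_1'\cdot M^i$, so $|(S_1\cdot M)^i|\ge|S_1'\cdot M^i|=|S_1'\cdot(M^i/x_1)|\ge d_i^{<n-2>}$, the last inequality being Macaulay's theorem in $S'$. When $d_i^{<n-2>}\ge d_{i-1}$ the lemma gives $|(S_1\cdot M)^i|=\max\{d_i^{<n-2>},d_{i-1}\}=d_i^{<n-2>}$, so the whole chain collapses to equalities; in particular $|S_1'\cdot(M^i/x_1)|=d_i^{<n-2>}=|M^i/x_1|^{<n-2>}$, that is, $M^i/x_1$ is Gotzmann in $S'$.

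For the second assertion, suppose $d_i^{<n-2>}<d_{i-1}$; this forces $i\ge1$. If $i=t$, reading $d_{t+1}=0$ we get $d_i+1\ge1>0=d_{i+1}^{<n-2>}$, so the second alternative of the conclusion holds; hence assume $1\le i\le t-1$ and, for a contradiction, that both $(d_i+1)^{<n-2>}\le d_{i-1}-1$ and $d_i+1\le d_{i+1}^{<n-2>}$. I would first check that the perturbed profile can be realized: $d_{i-1}\ge d_i^{<n-2>}+1\ge1$, and $d_i+1\le d_{i+1}^{<n-2>}\le(\dim S'_{t-i-1})^{<n-2>}=\dim S'_{t-i}$ (the last equality since $S_1'\cdot S'_{t-i-1}=S'_{t-i}$), so there exist lexsegment sets $L'\subseteq S'_{t-i+1}$ and $L''\subseteq S'_{t-i}$ of sizes $d_{i-1}-1$ and $d_i+1$, and one may form the set of monomials
\[
T'=\bigl(x_1^{i-1}\cdot L'\bigr)\sqcup\bigl(x_1^{i}\cdot L''\bigr)\sqcup\bigsqcup_{\substack{0\le j\le t\\ j\notin\{i-1,i\}}}x_1^{j}\cdot\lex_{S'}(M^j/x_1)\subseteq S_t ,
\]
whose profile is $d'_j=d_j$ for $j\notin\{i-1,i\}$, $d'_{i-1}=d_{i-1}-1$, $d'_i=d_i+1$, with $|T'|=\sum_j d_j=|M|$. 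Since $T'$, like the comparison set built in the proof of the previous lemma, is a disjoint union of $x_1^{j}$ times a lexsegment of $S'$, the same computation gives $|S_1\cdot T'|=\sum_j\max\{(d'_j)^{<n-2>},d'_{j-1}\}$, while the lemma gives $|S_1\cdot M|=\sum_j\max\{d_j^{<n-2>},d_{j-1}\}$. Comparing these term by term, only the indices $j\in\{i-1,i,i+1\}$ can change: at $j=i$ the term drops from $\max\{d_i^{<n-2>},d_{i-1}\}=d_{i-1}$ to $\max\{(d_i+1)^{<n-2>},d_{i-1}-1\}=d_{i-1}-1$; at $j=i+1$ it stays equal to $d_{i+1}^{<n-2>}$ since $d_i+1\le d_{i+1}^{<n-2>}$; and at $j=i-1$ it cannot increase, $d\mapsto d^{<n-2>}$ being nondecreasing. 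Hence $|S_1\cdot T'|\le|S_1\cdot M|-1$; but $M$ is Gotzmann, so $|S_1\cdot M|=|M|^{<n-1>}$, while inequality \eqref{ana} applied to $T'$ (which has size $|M|$) gives $|S_1\cdot T'|\ge|M|^{<n-1>}$, a contradiction.

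The only genuinely inventive step is the choice of perturbation. The hypothesis $d_i^{<n-2>}<d_{i-1}$ is precisely what forces the term of index $i$ in the growth formula to equal the larger quantity $d_{i-1}$ rather than $d_i^{<n-2>}$, which is what makes it plausible that raising $d_i$ at the expense of $d_{i-1}$ strictly lowers the total growth; and the two alternatives in the conclusion are exactly the two ways this can fail --- either the term of index $i$ does not actually drop, or the term of index $i+1$ rises to cancel the drop. Granting that, verifying realizability of $T'$ and the three-term comparison are routine.
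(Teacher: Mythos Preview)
Your proof is correct and follows essentially the same approach as the paper's: the first assertion is the squeeze between $S_1'\cdot M^i\subseteq (S_1\cdot M)^i$ and the lemma's exact value, and the second is proved by the same perturbation of the $x_1$-profile (moving one unit from layer $i-1$ to layer $i$ in the layer-wise lexification) and the same three-term comparison at indices $i-1,i,i+1$. Your write-up is slightly more explicit than the paper's in two places---you treat $i=t$ separately (the paper's contradiction hypothesis $d_q+1\le d_{q+1}^{\langle n-2\rangle}$ is vacuously impossible there), and you justify $d_i+1\le\dim S'_{t-i}$ via $d_{i+1}^{\langle n-2\rangle}$ rather than via the paper's observation that $d_i^{\langle n-2\rangle}<d_{i-1}$ forces $S_t^i\setminus T^i\neq\emptyset$---but these are cosmetic differences, not a different route.
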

\begin{proof}
 Assume that $d_i^{<n-2>}\ge d_{i-1}$ for some $0\le i\le t+1$. Then from the first statement we have
  $|(S_1\cdot M)^i|=
 d_i^{<n-2>}$. But $S_1'\cdot M^i$ is a subset of $(S_1\cdot M)^i$ and
 $|S_1'\cdot M^i|=|x_1^i\cdot (S_1'\cdot (M^i/x_1))|=|S_1'\cdot (M^i/x_1)|\ge d_i^{<n-2>}$.
 It follows that $|S_1'\cdot
 (M^i/x_1)|=d_i^{<n-2>}$and so $M^i/x_1$ is Gotzmann.

 We now prove the second assertion of the theorem. Assume that
 there exists an integer $1\le q\le t$ such that $d_q^{<n-2>}<
 d_{q-1}$. By way of contradiction assume further that $(d_q+1)^{<n-2>}\le d_{q-1}-1$ and $d_q+1\le
 d_{q+1}^{<n-2>}$.
 We obtain a contradiction by constructing a  set $W$ in $S_t$
 whose size grows strictly less than the size of $M$. Let
 $w_{q-1}$ be the minimal monomial in $T^{q-1}$.  Notice also that $d_q^{<n-2>}<
 d_{q-1}$ implies that $S_t^q\setminus
  T^q\neq \emptyset$ and let $w_q$ be the
  monomial that is maximal among the monomials in $S_t^q\setminus
  T^q$. Define $$W=\big (\bigsqcup_{0\le i\le t, \; i\neq q-1, q}T^i \big ) \sqcup (T^{q-1}\setminus \{w_{q-1}\}) \sqcup ( T^{q}\cup
  \{w_q\} ).$$
    Notice that by construction
  $W^i/x_1$ is a lexsegment set in $S'$ for all $0\le i\le t$.
Therefore, just as we saw for $T$, we have $|(S_1\cdot W)^i|=\max
\{|W^i|^{<n-2>},
  |W^{i-1}| \}$.   We also have $|W^i|=d_i$ for $i\neq q-1, q$, and  $|W^{q-1}|=d_{q-1}-1$ and $|W^q|=d_q+1$.
  It follows that $|(S_1\cdot T)^i|=|(S_1\cdot W)^i|$ for all $i\neq
  q-1, q, q+1$. We finish the proof by showing that
  $$\sum_{q-1\le i\le q+1}|(S_1\cdot W)^i|<\sum_{q-1\le i\le q+1}|(S_1\cdot
  T)^i|.$$
  We have  $|(S_1\cdot W)^{q-1}|=\max
\{(d_{q-1}-1)^{<n-2>},
  d_{q-2} \}\le \max
\{(d_{q-1})^{<n-2>},
  d_{q-2} \}= |(S_1\cdot T)^{q-1}|.$ Notice also that $|(S_1\cdot W)^q|=\max
\{(d_q+1)^{<n-2>},
  d_{q-1}-1 \}=d_{q-1}-1<d_{q-1}= \max
\{d_q^{<n-2>},
  d_{q-1} \}=|(S_1\cdot T)^q|.$ Finally, $|(S_1\cdot W)^{q+1}|=\max
\{d_{q+1}^{<n-2>},
  d_q+1 \}=d_{q+1}^{<n-2>}=|(S_1\cdot T)^{q+1}|.$
\end{proof}
 We generalize some properties of the minimal growth of the Hilbert
 function in $S$ to $R$. Firstly, we show that $d_{n,t}$ is
increasing in the first parameter and decreasing in the second
parameter.

\begin{proposition}
\label{prop1} Let $d$, $n$, $t$ be positive integers. Then the
following statements hold:
\begin{enumerate}
\item $d_{n+1,t}>d_{n,t}$.

\item $d_{n,t+1}\le d_{n,t}$.
\end{enumerate}
Moreover, for $n\ge 3$ we have $d_{n,t'}=d^{<n-2>}$ for $t'$
sufficiently large.
\end{proposition}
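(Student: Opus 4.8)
The plan is to handle the three assertions separately, in each case splitting into the degree ranges $t<a-1$, $t=a-1$ and $t\ge a$, and treating the last (``generic'') range with Lemmas \ref{ilk} and \ref{iki}.

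\textbf{Monotonicity in $n$.} For $t<a-1$ one has $d_{n,t}=d^{<n-1>}$ and $d_{n+1,t}=d^{<n>}$, so the claim is the inequality $d^{<n>}>d^{<n-1>}$, a statement about the polynomial ring $S$, namely \cite[1.7]{MR2368639} (equivalently, the last assertion of Lemma \ref{ilk} read in $n+1$ variables). For $t=a-1$, Remark \ref{a-1} gives $d_{n,t}=d^{<n-1>}-1$ and $d_{n+1,t}=d^{<n>}-1$, so the same inequality closes the case. For $t\ge a$ I would sandwich: Lemma \ref{iki} gives $d_{n,t}<d^{<n-1>}$, while the bound $d_{n,t}\ge d^{<n-2>}$ of Lemma \ref{ilk}, applied in $n+1$ variables, gives $d_{n+1,t}\ge d^{<(n+1)-2>}=d^{<n-1>}$; hence $d_{n,t}<d_{n+1,t}$. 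I see no real obstruction here.

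\textbf{Monotonicity in $t$.} For $t+1<a-1$ both sides are $d^{<n-1>}$; for $t=a-2$ one has $d_{n,t}=d^{<n-1>}$ and $d_{n,t+1}=d^{<n-1>}-1$ by Remark \ref{a-1}; for $t=a-1$ one has $d_{n,t}=d^{<n-1>}-1$ and, by Lemma \ref{iki}, $d_{n,t+1}=d_{n,a}<d^{<n-1>}$, hence $d_{n,a}\le d^{<n-1>}-1$ since both are integers. The substantive case is $t\ge a$, where I would use Lemma \ref{ilk}: with $L,L'$ the size-$d$ lexsegments in $R_t,R_{t+1}$ we have $d_{n,t}=\sum_i|L^i|^{<n-2>}$ and $d_{n,t+1}=\sum_i|L'^i|^{<n-2>}$, where $(|L^i|)_i$ and $(|L'^i|)_i$ are the greedy ``top fillings'' of $d$ into the bins $R_t^i$, $R_{t+1}^i$ of sizes $\dim S'_{t-i}$, $\dim S'_{t+1-i}$. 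Since every bin is larger for degree $t+1$, the second filling is obtained from the first by moving mass toward the top levels ($\sum_{i\ge m}|L'^i|\ge\sum_{i\ge m}|L^i|$ for all $m$, with equal totals $d$), and the task is the monotonicity lemma that such a redistribution does not increase $\sum_i(\cdot)^{<n-2>}$. Using that a full bin $R_t^i$ contributes $\dim S'_{t-i+1}$ to the shadow, one telescopes to the closed form
\[
\sum_i|L^i|^{<n-2>}=d+\bigl(p^{<n-2>}-p\bigr)+\bigl(\dim S'_{t-m}-\dim S'_{t-a+1}\bigr),
\]
where $m$ is the lowest occupied level and $p=|L^m|$ the partial amount there; the monotonicity lemma then reduces to comparing these three summands for $t$ and $t+1$, using that $x\mapsto x^{<n-2>}-x$ is non-decreasing (the shadow grows by at least $1$ at each step) and Pascal's identity for $\dim S'_\bullet$. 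I expect this comparison of binomial coefficients --- in particular the transition where the lowest occupied level $m$ rises as $t$ grows --- to be the main obstacle in the whole proposition. An equivalent route, via Lemma \ref{iki}, asks one to show $(d+c')^{<n-1>}-(d+c)^{<n-1>}\le\dim S_{t-a+2}-\dim S_{t-a+1}$ for $c=\dim S_{t-a}$ and $c'=\dim S_{t-a+1}=c^{<n-1>}$, i.e.\ to bound the growth of the Macaulay function over an interval of prescribed length.

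\textbf{The limiting value.} For $n\ge3$ the ring $S'=F[x_2,\dots,x_n]$ has at least two variables, so $\dim S'_s=\binom{s+n-2}{n-2}\to\infty$ as $s\to\infty$. Fix $T$ large enough that $\dim S'_{T-(a-1)}\ge d$ and $\dim R_T>d$. For every $t'\ge T$ the top $\dim S'_{t'-(a-1)}\ge d$ monomials of $R_{t'}$ in the lexicographic order are precisely those divisible by $x_1^{a-1}$ --- because $x_1$ is the largest variable and $a-1$ is the largest admissible power of $x_1$ in $R$ --- so the size-$d$ lexsegment $L$ in $R_{t'}$ satisfies $L=L^{a-1}$, i.e.\ $I(L)=a-1$. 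Lemma \ref{ilk} then gives $d_{n,t'}=|L^{a-1}|^{<n-2>}=d^{<n-2>}$. Combined with the monotonicity in $t$ and the bound $d_{n,t}\ge d^{<n-2>}$ of Lemma \ref{ilk}, this realizes $t\mapsto d_{n,t}$ as a non-increasing sequence that stabilizes at $d^{<n-2>}$.
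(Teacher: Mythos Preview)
Your treatment of part (1) and of the limiting value is essentially the paper's: the same three-range split, the same sandwich $d_{n,t}<d^{<n-1>}\le d_{n+1,t}$ from Lemmas \ref{iki} and \ref{ilk}, and the same observation that for large $t'$ the lex set sits entirely in the top layer so Lemma \ref{ilk} gives $d^{<n-2>}$.

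For part (2), your telescoped closed form
\[
d_{n,t}=d+(p^{<n-2>}-p)+(\dim S'_{t-m}-\dim S'_{t-a+1})
\]
is correct, and you rightly flag the comparison of these expressions for $t$ and $t+1$ as the crux. The paper resolves this crux differently and more directly. Rather than compare the closed forms head-on, it first proves the structural fact that the lowest occupied level can rise by at most one: writing $j_1=I(L_1)$, $j_2=I(L_2)$ for the lex sets in $R_t$, $R_{t+1}$, a short counting argument gives $j_2\in\{j_1,j_1+1\}$. With this in hand the comparison collapses to two concrete inequalities. In the case $j_2=j_1+1$ one has $|L_2^{j_2}|=|L_1^{j_1}|+|L_1^{a-1}|$ and all other layers match after a shift, so the needed inequality is exactly Murai's superadditivity \cite[1.5]{MR2368639}; your closed-form route would land here too. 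In the case $j_2=j_1$ one has $|L_1^{a-1}|+|L_1^{j_1}|=|L_2^{j_1}|+|L_2^{j_1+1}|$ with all four numbers bounded by the full layer $|L_2^{j_1+1}|=\dim S'_{t-j_1}$, and the paper invokes Murai's concavity-type lemma \cite[1.6]{MR2368639}. This is the ingredient your sketch is missing: the monotonicity of $x\mapsto x^{<n-2>}-x$ alone does not suffice here, because when $m$ stays fixed the partial layer $p$ can move in either direction while the third summand changes as well, and one genuinely needs the stronger statement that $(\cdot)^{<n-2>}$ is ``concave below a full degree''. Your alternative via Lemma \ref{iki} would face the same issue in a different guise.
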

\begin{proof}
   Since $d_{n,t}=d^{<n-1>}$ for
 $t<a-1$, the first statement is precisely \cite[1.7]{MR2368639}
 for $t<a-1$. Let $L$ be the lexsegment set of size $d$ in $R_t$.
 For $t=a-1$, we have $R_1\cdot L=(S_1\cdot L)\setminus \{x_1^a\}$.
 So $d_{n,a-1}=d^{<n-1>}-1$ and the first statement again follows
 from \cite[1.7]{MR2368639}. For $t\ge a$ by
Lemma \ref{iki} we have $d_{n,t}<d^{<n-1>}$. On the other hand,
$d_{n+1,t}\ge
 d^{<n-1>}$ by Lemma \ref{ilk}. This establishes  the first statement.

 Since $d_{n,t}=d^{<n-1>}$ for $t\le a-2$, the second statement holds trivially for
 $t<a-2$. Moreover, we have $d_{n,a-1}=d^{<n-1>}-1$  from the previous paragraph and so
 $d_{n,a-1}<d_{n,a-2}$ as well. Also
 we
 eliminate the case $n=2$ because $d_{2,t}=d$ for $t\ge a-1$.
 So we assume that $t\ge a-1$ and $n>2$. Note that $n>2$
  implies that $|R_t^i|<|R_{t+1}^i|$ and $|R_t^i|<|R_t^{i-1}|$
 for $t\ge a-1$ and $i\le a-1$.  Let $L_1$ and $L_2$ be two lexsegment sets of equal sizes in $R_t$ and $R_{t+1}$
 respectively. The rest of the proof of the second statement is devoted to showing $|R_1\cdot L_1|\ge |R_1\cdot
 L_2|$.

 Set $j_1=I(L_1)$ and $j_2=I(L_2)$. Since $L_1$ and $L_2$ are  lexsegment sets,
  we have $L_1^i=R_t^i$   for $j_1<i\le
a-1$ and $L_2^i=R_{t+1}^i$ for $j_2<i\le a-1$. We also have
$|L_1^{j_1}|\le |R_t^{j_1}|$ and $|L_2^{j_2}|\le |R_{t+1}^{j_2}|$.
  But since $|R_t^i|< |R_{t+1}^i|$, we have
$|L_1^i|<|L_2^i|$ for all $i$ that is strictly bigger than both
$j_1$ and $j_2$. Therefore $j_1\le j_2$ because the sizes of $L_1$
and $L_2$ are the same. Also note that $|R_t^i|=|R_{t+1}^{i+1}|$
and so $|L_1^{i}|=|L_2^{i+1}|$ for $\max\{j_1, j_2-1\}<i< a-1$. We
claim that $j_1+1\ge j_2$. Otherwise we obtain a contradiction as
follows. We have $\max\{j_1, j_2-1\}=j_2-1\neq j_1$ and
$|L_1^{j_2-1}|=|R_t^{j_2-1}|=|R_{t+1}^{j_2}|\ge |L_2^{j_2}|$.
Therefore
$$|L_2|=\sum_{j_2\le i\le a-1}|L_2^i|\le
 \sum_{j_2-1\le i\le a-2}|L_1^i|<\sum_{j_2-1\le i\le a-1}|L_1^i|<\sum_{j_1\le i\le a-1}|L_1^i|=
|L_1|.$$ Thus we  have either $j_1=j_2$ or $j_1+1=j_2$. We handle
these cases separately.

We first assume that $j_1=j_2$. Set $j=j_1$. If $j=a-1$, then by
Lemma \ref{ilk} we have $|R_1\cdot
L_1|=|L_1^{a-1}|^{<n-2>}=|L_1|^{<n-2>}$ and similarly we get
$|R_1\cdot L_2|=|L_1|^{<n-2>}$ giving  the desired inequality. So
assume that $ j<a-1$. Since $|L_1|=|L_2|$ and
 $|L_1^i|=|L_2^{i+1}|$ for $j+1<i< a-1$,
we have  $|L_1^{a-1}|+|L_1^j|=|L_2^j|+|L_2^{j+1}|$. Moreover,  by
Lemma \ref{ilk}, we have $$|R_1\cdot L_1|-|R_1\cdot
L_2|=|L_1^{a-1}|^{<n-2>}+|L_1^j|^{<n-2>}-|L_2^j|^{<n-2>}-|L_2^{j+1}|^{<n-2>}.$$
If $|L_1^j|=|L_2^{j+1}|$, then $|L_1^{a-1}|=|L_2^j|$ as well and
the right hand side of the equation above is zero giving
$|R_1\cdot L_1|=|R_1\cdot
 L_2|$. If
$|L_1^j|\neq |L_2^{j+1}|$, then we necessarily have $|L_1^j|<
|L_2^{j+1}|$ because $L_1^j$ is a subset of $R_t^j$ whose size is
equal to the size of $R_{t+1}^{j+1}=L_2^{j+1}$. Moreover
$|L_1^{a-1}|=|R_t^{a-1}|<|R_{t+1}^{a-1}|\le |R_{t+1}^{j+1}|$
because $j+1\le a-1$ and so $|L_1^{a-1}|<|L_2^{j+1}|$ as well. Now
we get that $|L_2^j|<|L_2^{j+1}|$ because both $|L_1^j|$ and
$|L_1^{a-1}|$ is strictly smaller than $|L_2^{j+1}|$ and
$|L_1^{a-1}|+|L_1^j|=|L_2^j|+|L_2^{j+1}|$. But $|L_2^{j+1} |$ is
the number of all monomials of degree $t-j$ in $n-1$ variables.
Hence \cite[1.6]{MR2368639} applies and  we get
$$|L_1^{a-1}|^{<n-2>}+|L_1^j|^{<n-2>}-|L_2^j|^{<n-2>}-|L_2^{j+1}|^{<n-2>}\ge 0.$$
 Equivalently, $|R_1\cdot L_1|\ge |R_1\cdot
 L_2|$ as desired.

If $j_1+1=j_2$, then from   $|L_1^i|=|L_2^{i+1}|$ for $ j_1<i<a-1$
and Lemma \ref{ilk} we have
$|L_1^{a-1}|+|L_1^{j_1}|=|L_2^{j_1+1}|$ and $$|R_1\cdot
L_1|-|R_1\cdot
L_2|=|L_1^{a-1}|^{<n-2>}+|L_1^{j_1}|^{<n-2>}-|L_2^{j_1+1}|^{<n-2>}.$$
So we get that $|R_1\cdot L_1|-|R_1\cdot L_2|>0$ from
\cite[1.5]{MR2368639}.

Finally, fix an integer $d$.  Then  for a sufficiently large
integer $t'$, the number of all monomials of degree $t'-a-1$ in
$S'$ is bigger than $d$ ($n>2$ is essential here). Now let $L$ be
the lexsegment set of size $d$ in $R_{t'}$. Then we have
$L^{a-1}=L$ and therefore  Lemma \ref{ilk} gives
$d_{n,t'}=d^{<n-2>}$ as desired.
\end{proof}

 Let $L_1$ and $L_2$ be two lexsegment sets in $R_t$ with sizes
$b$ and $c$ respectively with $b\ge c$. Let   $j_1$ and $j_2$
denote $I(L_1)$ and $I(L_2)$. Note that we have $j_1\le j_2$.
Define
$$t'=\begin{cases}
t-j_2 \text{ if } j_1=j_2 \; \text {and}\; j_1\neq a-1 \;
\text{and} \;
x_1^{j_1}x_n^{t-j_1}\notin (L_1\cup L_2); \\
t+1-j_2 \text{ otherwise. }
\end{cases}
 $$

  We finish by noting down an adoption of
\cite[1.5]{MR2368639} for the ring $R$.

\begin{proposition}
Assume the notation of the previous paragraph. For $n\ge 3$ we
have
$$b_{n,t}+c_{n,t}>(b+c)_{n,t+t'}.$$
\end{proposition}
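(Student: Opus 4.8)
The plan is to reduce the inequality, via Lemma~\ref{ilk} (and Lemma~\ref{iki}), to the strict subadditivity of $x\mapsto x^{<n-2>}$ from \cite[1.5]{MR2368639}, used together with \cite[1.6]{MR2368639}. I would first dispose of small $t$: if $t\le a-2$ then $b_{n,t}=b^{<n-1>}$, $c_{n,t}=c^{<n-1>}$, while in every degree $h$ one has $(b+c)_{n,h}\le(b+c)^{<n-1>}$, the inequality being strict once $h\ge a-1$ (by Lemma~\ref{iki} and Remark~\ref{a-1}); since $(b+c)^{<n-1>}<b^{<n-1>}+c^{<n-1>}$ by \cite[1.5]{MR2368639}, the claim follows, and the boundary case $t=a-1$ is the same with $d_{n,a-1}=d^{<n-1>}-1$. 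So assume $t\ge a$.

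For $t\ge a$, Lemma~\ref{ilk} gives
\[
b_{n,t}+c_{n,t}=\sum_{0\le i\le a-1}\bigl(|L_1^i|^{<n-2>}+|L_2^i|^{<n-2>}\bigr),\qquad (b+c)_{n,t+t'}=\sum_{0\le i\le a-1}|L_3^i|^{<n-2>},
\]
where $L_3=\lex_R(b+c;R_{t+t'})$ and an empty block contributes $0$. Here $b\ge c$ forces $L_2\subseteq L_1$ and $j_1\le j_2$; for $i>I(L_k)$ the block $|L_k^i|=|R_t^i|=\binom{t-i+n-2}{n-2}$ is a ``full'' binomial with $|R_t^i|^{<n-2>}=|R_{t+1}^i|$, and the same holds for the full blocks of $L_3$. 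The heart of the proof is to read off the block profile of $L_3$ — the index $j_3=I(L_3)$ and the sizes $|L_3^i|$ — and this is exactly where the case-dependent value of $t'$ is used: one bounds $b+c$ by $\sum_{i\ge j_1}|R_t^i|+\sum_{i\ge j_2}|R_t^i|$ and compares it, via the identity $|R_{t+t'}^i|=\binom{t+t'-i+n-2}{n-2}$, with the partial sums of the $|R_{t+t'}^i|$ taken from the top, which locates $j_3$. For the two choices of $t'$ this is designed so that the degree-$(t+t')$ full blocks account, respectively, for the single full blocks of $L_1$ of index between $j_1$ and $j_2$ together with the pairs of full blocks of $L_1,L_2$ of higher index, or — in the exceptional case $j_1=j_2=j<a-1$ with $x_1^{j}x_n^{t-j}\notin L_1\cup L_2$, where $t+t'=2t-j$ — for the pairs of full blocks assembled two-into-one; in the ``$+1$'' cases the estimate has room to spare, while in the exceptional case it is tight, the slack for $n=3$ being a strictly positive quadratic in $t-j$ and $a-1-j$, which is precisely why $t'$ is taken one smaller there.

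With $j_3$ and the block sizes of $L_3$ in hand, the inequality becomes a comparison of two sums of values of $x\mapsto x^{<n-2>}$ over multisets with the same total $b+c$: the multiset of blocks of $L_3$, all of which are full binomials except the bottom one $|L_3^{j_3}|$, is more concentrated than the multiset $\{|L_1^i|\}\cup\{|L_2^i|\}$ of blocks of $L_1$ and $L_2$, so repeated use of subadditivity \cite[1.5]{MR2368639} (for merging blocks) and of \cite[1.6]{MR2368639} (concentrating weight into a full binomial block is at least as efficient as spreading it) gives ``$\le$'', and the inequality is strict because at the top index $i=a-1$ both $|L_1^{a-1}|\ge1$ and $|L_2^{a-1}|\ge1$ feed the single top block of $L_3$, where \cite[1.5]{MR2368639} applies strictly. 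The hypothesis $n\ge3$ is what makes $x^{<n-2>}$ the honest Macaulay operator (so $x^{<n-2>}\ge x$ and the subadditivity is strict) and what makes the degrees $t+t'$ large enough for the estimate. The main obstacle is the bookkeeping in the exceptional case: since $t'$ is then as small as it can be, the inequality fixing $j_3$ is tight and one has to check carefully that the ``more concentrated'' comparison really does collapse onto \cite[1.5]{MR2368639} and \cite[1.6]{MR2368639} and is not short by one.
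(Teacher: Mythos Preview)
Your approach is genuinely different from the paper's, and as written it is a strategy rather than a proof. The paper never computes the block profile of $L_3$. Instead it exhibits monomials $m_1,m_2\in S'_{t'}$ such that $m_1\cdot L_1$ and $m_2\cdot L_2$ are disjoint in $R_{t+t'}$ while $R_1\cdot(m_1\cdot L_1)$ and $R_1\cdot(m_2\cdot L_2)$ overlap. Since multiplication by a monomial of $S'$ does not touch the $x_1$-exponent, $|R_1\cdot(m_k\cdot L_k)|=|R_1\cdot L_k|$, and one line gives
\[
(b+c)_{n,t+t'}\le |R_1\cdot(m_1\cdot L_1\sqcup m_2\cdot L_2)|<b_{n,t}+c_{n,t}.
\]
For most cases $m_1=x_2^{t'}$, $m_2=x_n^{t'}$ work; the definition of $t'$ is calibrated precisely so that such $m_1,m_2$ exist. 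No appeal to \cite[1.6]{MR2368639} or any block-size accounting is needed.

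Your route---expand both sides by Lemma~\ref{ilk} and compare the resulting multisets of block sizes via \cite[1.5,1.6]{MR2368639}---is the same mechanism used for the two-block comparison in Proposition~\ref{prop1}, and it is plausible here. But you never carry it out: you do not compute $j_3$ or the sizes $|L_3^i|$, and ``more concentrated'' is not a statement backed by a lemma. For each of the cases built into $t'$ you would have to match the blocks of $L_1,L_2$ against those of $L_3$ and check that every merge is an instance of \cite[1.5]{MR2368639} or \cite[1.6]{MR2368639}; you yourself flag this as the unfinished part. One place where the sketch already breaks is $t=a-1$: there $b_{n,a-1}+c_{n,a-1}=b^{<n-1>}+c^{<n-1>}-2$, and chaining the two strict inequalities $(b+c)_{n,t+t'}<(b+c)^{<n-1>}<b^{<n-1>}+c^{<n-1>}$ yields only $(b+c)_{n,t+t'}\le b^{<n-1>}+c^{<n-1>}-2$, i.e.\ $\le$ rather than $<$. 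The paper accordingly keeps $t=a-1$ inside the main constructive argument, not in the easy reduction.
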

\begin{proof}
Note that for $t<a-1$, the statement of the lemma follows from
\cite[1.5]{MR2368639} because then $b_{n,t}=b^{<n-1>}$,
$c_{n,t}=c^{<n-1>}$ and $(b+c)_{n,t+t'}\le (b+c)^{<n-1>}$. So we
assume $t\ge a-1$.

To prove the lemma it is enough to show that there exist monomials
$m_1$ and $m_2$ of degree $t'$ in $S'$ such that $m_1\cdot L_1$
and $m_2\cdot L_2$ are disjoint and that $R_1\cdot (m_1\cdot L_1)$
and $R_1\cdot (m_2\cdot L_2)$ have non-empty intersection. Because
then
$$(b+c)_{n,t+t'}\le |R_1 \cdot \big (m_1\cdot L_1 \sqcup m_2\cdot L_2 \big )|<
|R_1\cdot  (m_1\cdot L_1)|+|R_1\cdot  (m_2\cdot
L_2)|=b_{n,t}+c_{n,t}.$$ In the following we use the fact  that if
a minimal element in a set is of higher rank than the maximal
element in another set then these two sets do not intersect.

 We handle the case
$j_1=j_2=a-1$ separately and the proof for this case essentially
carries over from \cite[1.5]{MR2368639}.   Let $x_1^{a-1}w$ be the
minimal element in $L_1$, where $w$ is a monomial in $S'$.
Consider $x_2^{t'} \cdot L_1$ and $wx_n \cdot L_2$. The minimal
element of $x_2^{t'} \cdot L_1$ is $x_1^{a-1}x_2^{t'}w$ and the
maximal element of  $wx_n \cdot L_2$ is
$x_1^{a-1}x_2^{t-a+1}wx_n$. Then $x_1^{a-1}x_2^{t'}w>
x_1^{a-1}x_2^{t-a+1}wx_n$ since $t'=t-a+2$. On the other hand
$x_1^{a-1}x_2^{t'}wx_n$ lie in both $R_1\cdot (x_2^{t'}\cdot L_1)$
and $R_1\cdot (wx_n\cdot L_2)$.

It turns out that for all the remaining cases one can choose
$m_1=x_2^{t'}$ and $m_2=x_n^{t'}$. We first show $x_2^{t'}\cdot
L_1 \cap x_n^{t'}\cdot L_2=\emptyset$ and to this end it suffices
to show that $x_2^{t'}\cdot L_1^i \cap x_n^{t'}\cdot
L_2^i=\emptyset$ for $i\le a-1$.

 Since $L_1^i=R_t^i$ for $i>j_1$, the minimal element in
$x_2^{t'}\cdot L_1^i$ is $x_1^ix_2^{t'}x_n^{t-i}$  for $i>j_1$.
Similarly, $L_2^i=R_t^i$ for $i>j_2$, so the maximal element in
$x_n^{t'}\cdot L_2^i$  is $x_1^ix_2^{t-i}x_n^{t'}$. But $t'\ge
t-i$ for $i>j_2$ and so we have $x_1^ix_2^{t'}x_n^{t-i} >
x_1^ix_2^{t-i}x_n^{t'}$
 for
$j_2<i$. Also $L_2^i=\emptyset$ for $i<j_2$ and therefore it only
remains to show $x_2^{t'}\cdot L_1^{j_2}\cap x_n^{t'}\cdot
L_2^{j_2}=\emptyset $. Let $u$ and $v$ denote the maximal monomial
in  $x_n^{t'}\cdot L_2^{j_2}$ and minimal monomial in
 $x_2^{t'}\cdot L_1^{j_2}$, respectively. Note that $u=x_1^{j_2}x_2^{t-j_2}x_n^{t'}$.
We consider different cases and show that $v>u$ in each case.
 First assume that  $j_2>j_1$. Then we have $L_1^{j_2}=R_t^{j_2}$ and
 $t'=t+1-j_2$. It follows that
 $v=x_1^{j_2}x_2^{t'}x_n^{t-j_2}=x_1^{j_2}x_2^{t+1-j_2}x_n^{t-j_2}>u$.
 Now assume that $j_1=j_2$ and $x_1^{j_2}x_n^{t-j_2}\in (L_1\cup
L_2)$. Then $t'=t+1-j_2$. Also, since $|L_1|\ge |L_2|$ and
$x_1^{j_2}x_n^{t-j_2}$ is the minimal element in $R_t^{j_2}$, we
have $x_1^{j_2}x_n^{t-j_2}\in L_1^{j_2}$ and so
$v=x_1^{j_2}x_2^{t'}x_n^{t-j_2}=x_1^{j_2}x_2^{t+1-j_2}x_n^{t-j_2}>u$.
Finally, if $j_1=j_2$ and $x_1^{j_2}x_n^{t-j_2}\notin (L_1\cup
L_2)$, then the minimal monomial in $L_1^{j_2}$ is bigger than
$x_1^{j_2}x_n^{t-j_2}$ and so
$v>x_1^{j_2}x_2^{t'}x_n^{t-j_2}=x_1^{j_2}x_2^{t-j_2}x_n^{t-j_2}=u$.

We now show that $R_1\cdot (x_2^{t'}\cdot L_1)$ and $R_1\cdot
(x_n^{t'}\cdot L_2)$ have non-empty intersection. If $j_1<j_2$,
then as we saw above $L_1^{j_2}=R_t^{j_2}$ and so
$x_1^{j_2}x_n^{t-j_2}$ is the minimal element in $L_1^{j_2}$ and
hence $x_1^{j_2}x_2^{t+1-j_2}x_n^{t-j_2+1}\in R_1\cdot
(x_2^{t'}\cdot L_1)$. But since $x_1^{j_2}x_2^{t-j_2}$ is the
maximal element in $L_2^{j_2}$,
$x_1^{j_2}x_2^{t-j_2+1}x_n^{t+1-j_2}\in R_1\cdot (x_n^{t'}\cdot
L_2)$ as well. For the case $j_1=j_2$ first assume that
$x_1^{j_2}x_n^{t-j_2}\in (L_1\cup L_2)$. We have computed the
maximal monomial $u$ in $x_n^{t'}\cdot L_2^{j_2}$ and the minimal
monomial $v$ in
 $x_2^{t'}\cdot L_1^{j_2}$ for this case in the previous paragraph. Notice that
 we have $vx_n=ux_2$ giving $R_1\cdot (x_2^{t'}\cdot L_1)
\cap R_1\cdot (x_n^{t'}\cdot L_2) \neq \emptyset$. Finally, assume
$j_1=j_2< a-1$ and $x_1^{j_1}x_n^{t-j_1}\notin (L_1\cup L_2)$.
Then $L_1^{j_2+1}=L_2^{j_2+1}=R_t^{j_2+1}$ and so the minimal
element in $x_2^{t'}\cdot L_1^{j_2+1}$ is
$x_1^{j_2+1}x_2^{t-j_2}x_n^{t-1-j_2}$ giving
$x_1^{j_2+1}x_2^{t-j_2}x_n^{t-j_2}\in R_1\cdot (x_2^{t'}\cdot
L_1)$. But this monomial is in $R_1\cdot (x_n^{t'}\cdot L_2)$ as
well because the maximal element in $x_n^{t'}\cdot L_2^{j_2+1}$ is
$x_1^{j_2+1}x_2^{t-1-j_2}x_n^{t-j_2}$.
\end{proof}

\begin{ack} We thank Satoshi Murai for pointing out the assertion
of Theorem \ref{dusurme} to us.
\end{ack}

\bibliographystyle{plain}
\bibliography{Bibliography_Version_10}
\end{document}